\documentclass{amsart}
\usepackage{amsfonts,amssymb,amscd,amsmath,enumerate,verbatim,calc}
\usepackage[all]{xy}
\SelectTips{eu}{}

\newcommand{\mf}{\mathfrak }

\newcommand{\xra}{\xrightarrow}
\newcommand{\arr}{\rightarrow}

\newcommand{\sr}{{{}^{\varphi}\!}}
\newcommand{\srh}{{{}^{\widehat \varphi}\!}}

\newcommand{\wh}{\widehat }

\newcommand{\rank}{\operatorname{rank}}
\newcommand{\type}{\operatorname{type}}
\newcommand{\Ann}{\operatorname{Ann}}
\newcommand{\Ass}{\operatorname{Ass}}

\newcommand{\Ker}{\operatorname{Ker}}

\newcommand{\Homology}{\operatorname{H}}
\newcommand{\Spec}{\operatorname{Spec}}

\newcommand{\Rhom}[3]{{\bf{R}\!\Hom}_{#1}(#2,#3)}
\newcommand{\dtensor}[3]{{#1}\otimes_{#2}^{\bf{L}}{#3}}

\newcommand{\depth}{\operatorname{depth}}

\newcommand{\Hom}{\operatorname{Hom}}

\newcommand{\Ext}{\operatorname{Ext}}
\newcommand{\ext}[4]{\operatorname{Ext}^{#1}_{#2}(#3,#4)}
\newcommand{\gam}[1][\mf m]{\Gamma_{\!#1}}
\newcommand{\lch}[1]{\operatorname{H}_{\!\mf m}^{#1}}

\theoremstyle{plain}

\theoremstyle{definition}

%%%%%%%%%%%%% theorem styles:

\theoremstyle{plain}

\newtheorem{theorem}{Theorem}[section]

\newtheorem{lemma}[theorem]{Lemma}

\newtheorem{corollary}[theorem]{Corollary}

\newtheorem{itheorem}{Theorem}

\theoremstyle{definition}

\newenvironment{mainthmproof}{%

\begin{proof}}{\end{proof}}

\newtheorem{chunk}[theorem]{}

\theoremstyle{remark}

\newtheorem{remark}[theorem]{Remark}

\newtheorem*{Remark}{Remark}

\numberwithin{equation}{theorem}

\input xy
\xyoption{all}

%\date{\today}

\begin{document}

\title[Contracting Endomorphisms ]
{Contracting Endomorphisms  and  \\ Gorenstein Modules}

\author{Hamid Rahmati}
\address{Department of Mathematics, University of Nebraska,  Lincoln, NE 68588, U.S.A.}
\email{hrahmati@math.unl.edu}

\thanks{Research partly supported through NSF grant DMS 0201904}

\keywords{Frobenius endomorphism, Gorenstein ring, homological dimension}

\subjclass[2000]{Primary 13C13, 13H10. Secondary 13D05}

\begin{abstract}
A finite module $M$ over a noetherian local ring $R$ is said to be
Gorenstein if $\Ext^i(k,M)=0$ for all $i \ne \dim R$. A endomorphism
$\varphi\colon R\to R$ of rings is called contracting if  $\varphi
^i(\mf m) \subseteq \mf m^2$ for some $i \geq 1$. Letting $\sr R $
denote the $R$-module $R$ with action induced by $\varphi$, we prove:
A finite $R$-module M is Gorenstein if and only if  $\rm {Hom}_R(\sr R,M)
\cong M$ and $\rm {Ext}_R^i(\sr R,M) = 0$ for $1 \leq i \leq \rm {depth}
R$. %This result  is applied to the study of the Frobenius endomorphism of
%a local ring of positive characteristic. 
 \end{abstract}

\maketitle

\section*{Introduction}

Let  $(R, \mf m)$ be a local noetherian ring and $\varphi\colon R \to R$
a homomorphism of rings which is local, that is to say, $\varphi(\mf
m)\subseteq \mf m$.  Such an endomorphism $\varphi$ is said to be
\emph{contracting} if $\varphi^i(\mf m) \subseteq \mf m^2$ for some
non-negative integer $i$.

The Frobenius endomorphism of a ring of positive  characteristic is the
prototypical example of a contracting endomorphism.  It has been used
to study rings of prime characteristic and modules over those rings.
There are, however, other natural classes of contracting endomorphisms;
see \cite{AIM}, where Avramov, Iyengar, and Miller show that they enjoy
many properties of the Frobenius map. In this paper we provide two
further results in this direction.

In what follows, we write $\sr R$ for $R$ viewed as a module over
itself via $\varphi$; thus, $r\cdot x = \varphi(r)x$ for $r \in R$ and
$x \in \sr R$. The endomorphism $\varphi$ is \emph{finite} if $\sr R$
is a finite $R$-module. For any $R$-module $M$, we view $\Hom_R(\sr R,M)$
as an $R$-module with action defined by $(r\cdot f)(s)=f(sr)$ for $r\in R$,
$s\in \sr R$, and $f\in \Hom_{R}(R,M)$.

A finitely generated $R$-module $M$ is said to be \emph{Gorenstein}
if it is maximial Cohen-Macaulay and  has finite injective dimension.

\begin{itheorem}
\label{theorem1}
Let $\varphi \colon R \to R$ be a finite contracting endomorphism and $M$ a finite
$R$-module. The following are equivalent:
\begin{enumerate}[\quad \rm (a)]
\item  $R$ is Cohen-Macaulay and $M$ is Gorenstein.
\item $M\cong \Hom_R(\sr R,M)$  and one has $\Ext^i_R(\sr R,M) = 0$ for $i \geq 1$.
\item $M$ is a direct summand of $\Hom_R(\sr R,M)$ as an $R$-module, and one has
  $\Ext^i_R(\sr R,M) = 0$ for $1 \leq i \leq \depth M$.
\end{enumerate}
\end{itheorem}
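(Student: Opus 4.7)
I would prove the equivalence by establishing the cyclic chain (a) $\Rightarrow$ (b) $\Rightarrow$ (c) $\Rightarrow$ (a). The implication (b) $\Rightarrow$ (c) is immediate: an isomorphism exhibits a direct summand, and vanishing of $\Ext^i_R(\sr R,M)$ for all $i\geq 1$ subsumes the range $1\leq i\leq\depth M$. The substantive content is therefore concentrated in (a) $\Rightarrow$ (b) and in (c) $\Rightarrow$ (a). Both implications make essential use of the result of Avramov, Iyengar, and Miller that for a finite contracting endomorphism $\varphi$, one has $\depth_R\sr R=\depth R$; in particular, $\sr R$ is maximal Cohen--Macaulay whenever $R$ is Cohen--Macaulay.

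For (a) $\Rightarrow$ (b), assuming $R$ is CM and $M$ is Gorenstein, I would combine the MCM property of $\sr R$ with the duality behavior of a Gorenstein module to conclude $\Ext^i_R(\sr R,M)=0$ for every $i\geq 1$: the contravariant functor $\Hom_R(-,M)$ is exact on MCM $R$-modules with no higher Ext whenever $M$ is Gorenstein. For the isomorphism $\Hom_R(\sr R,M)\cong M$, I would pass to the $\mf m$-adic completion (which preserves the hypotheses since $\sr R$ is a finite $R$-module, so Hom and Ext commute with completion), thereby gaining a canonical module $\omega$. Every Gorenstein module over a complete CM local ring is a finite direct sum of copies of $\omega$, and Hom distributes over finite sums, so it suffices to verify $\Hom_R(\sr R,\omega)\cong\omega$. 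With the action $(r\cdot f)(s)=f(sr)$ on $\Hom_R(\sr R,\omega)$, this module is exactly the canonical module of $R$ regarded as a finite $R$-algebra via $\varphi$, so the isomorphism follows from uniqueness of canonical modules.

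For (c) $\Rightarrow$ (a), the strategy is first to bootstrap the bounded-range Ext vanishing in (c) into vanishing of $\Ext^i_R(\sr R,M)$ in every positive degree, then apply an AIM-type criterion identifying such vanishing with finiteness of $\id_R M$, and finally to use the splitting $M\mid\Hom_R(\sr R,M)$ combined with $\depth_R\sr R=\depth R$ to force $\depth M=\dim R$; that $M$ is MCM with $\id_R M<\infty$ makes $M$ Gorenstein and forces $R$ to be CM. The bootstrap itself would proceed by iterating $\Hom_R(\sr R,-)$ on the splitting, using the adjunction
\[
  \Hom_R(\sr R,\Hom_R(\sr R,M))\cong\Hom_R(\sr R\otimes_R\sr R,M)\cong\Hom_R(\up{\varphi^2}R,M),
\]
together with the fact that each $\varphi^n$ is again a finite contracting endomorphism, so that Ext vanishing in the range $[1,\depth M]$ can be propagated systematically through higher cohomological degrees.

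The principal obstacle is this propagation step in (c) $\Rightarrow$ (a). The direct summand condition provides fresh Ext vanishing each time $\Hom_R(\sr R,-)$ is applied to a split inclusion, and the contracting property of the iterates $\varphi^n$ gives the asymptotic control needed to reach every cohomological degree, but combining these into a clean induction while keeping careful track of the non-standard $R$-action $(r\cdot f)(s)=f(sr)$ on $\Hom_R(\sr R,M)$ and its interaction with the usual structures on $M$ and $\sr R$ is where the real technical work will lie.
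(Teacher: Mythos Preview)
Your argument for (a) $\Rightarrow$ (b) is essentially the paper's: complete, write $M\cong\omega^n$, use that ${}^{\varphi}\!R$ is maximal Cohen--Macaulay, and invoke uniqueness of canonical modules. The difficulty is entirely in (c) $\Rightarrow$ (a), and there your plan has a genuine gap at the point you yourself flag. Iterating $\Hom_R({}^{\varphi}\!R,-)$ on the split inclusion does give $M\mid\Hom_R({}^{\varphi^n}\!R,M)$ for every $n$, but it does \emph{not} propagate the Ext vanishing upward. From $\Ext^i_R({}^{\varphi}\!R,M)=0$ for $1\le i\le g$ you learn that $\Hom_R({}^{\varphi}\!R,I^0),\dots,\Hom_R({}^{\varphi}\!R,I^g)$ begin an injective resolution of $\Hom_R({}^{\varphi}\!R,M)$; applying $\Hom_R({}^{\varphi}\!R,-)$ again computes $\Ext^i_R({}^{\varphi^2}\!R,M)$ in this range, but nothing in the hypotheses forces these to vanish, and the direct-summand relation points the wrong way to transfer vanishing from $M$ to $\Hom_R({}^{\varphi}\!R,M)$. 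So the range $[1,g]$ never grows. The subsequent steps are also soft: even granting an AIM-type criterion giving $\operatorname{id}_RM<\infty$ from full Ext vanishing, the claim that the splitting together with $\operatorname{depth}_R{}^{\varphi}\!R=\operatorname{depth}R$ forces $\operatorname{depth}M=\dim R$ has no visible mechanism (finite injective dimension alone certainly does not make $M$ maximal Cohen--Macaulay).

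The paper avoids the bootstrap entirely by passing to local cohomology. Set $g=\operatorname{depth}M$. By a criterion of Fossum--Foxby--Griffith--Reiten, condition (a) is equivalent to the single statement that $H^g_{\mathfrak m}(M)$ is injective. The paper proves, as a Matlis-dual companion to Herzog's freeness test, that an artinian module $N$ is injective if and only if $N$ is a direct summand of $\Hom_R({}^{\varphi}\!R,N)$. So everything reduces to showing $H^g_{\mathfrak m}(M)\mid\Hom_R({}^{\varphi}\!R,H^g_{\mathfrak m}(M))$. Here the Ext vanishing in exactly the range $1\le i\le g$ is used once, and only once: it guarantees that $\Hom_R({}^{\varphi}\!R,I^{\bullet})$ is acyclic through degree $g$, so the splitting $M\mid\Hom_R({}^{\varphi}\!R,M)$ lifts to a splitting of minimal injective resolutions through degree $g$; applying $\Gamma_{\mathfrak m}$ (which commutes with $\Hom_R({}^{\varphi}\!R,-)$ by a short lemma, and kills $I^i$ for $i<g$) then yields the desired splitting on $H^g_{\mathfrak m}(M)$. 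No iteration, no appeal to AIM rigidity, and no separate depth argument are needed.
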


This theorem generalizes a theorem of Goto~\cite[(1.1)]{G} for the Frobenius endomorphism;
see Corollary~\ref{maincorollary}. As another application, we give a short proof of a recent
result of Iyengar and Sather-Wagsaff \cite{IS} in the special case when $\varphi$ is
finite.

\begin{itheorem}
\label{theorem3} 
Let  $\varphi \colon R \to R$ be a finite contracting homomorphism. The
ring $R$ is Gorenstein if and only if $\operatorname{G-dim}_R(\sr R)$
is finite.  \end{itheorem}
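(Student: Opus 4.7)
My plan is to derive Theorem~\ref{theorem3} from Theorem~\ref{theorem1} via the Auslander--Bridger formula.

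The forward direction is the classical fact that over a Gorenstein local ring every finite module has finite Gorenstein dimension; since $\varphi$ is finite, $\sr R$ is a finite $R$-module, so $\operatorname{G-dim}_R(\sr R)<\infty$.

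For the converse, assume $\operatorname{G-dim}_R(\sr R)<\infty$. My first step is to show $\depth_R(\sr R)=\depth R$. Because $\varphi$ is finite, $\sr R$ is a finite $R$-algebra; its maximal ideal, viewed through the ring identification $\sr R=R$, is $\mf m$, and $\mf m\sr R=\varphi(\mf m)R$ is $\mf m$-primary since $\sr R/\mf m\sr R=R/\varphi(\mf m)R$ is a finite $k$-algebra. The standard change-of-ring for depth along a finite local map then gives $\depth_R(\sr R)=\depth_{\sr R}(\sr R)=\depth R$. Applying the Auslander--Bridger formula
\[
  \operatorname{G-dim}_R(\sr R) = \depth R - \depth_R(\sr R)
\]
yields $\operatorname{G-dim}_R(\sr R)=0$, so $\sr R$ is totally reflexive and $\Ext^i_R(\sr R, R)=0$ for every $i\geq 1$.

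To conclude, I would apply Theorem~\ref{theorem1} with $M=R$. The Ext vanishing required in part~(c) is automatic from the previous step, so the only remaining hypothesis to verify is that $R$ is a direct summand of $\Hom_R(\sr R, R)$ as an $R$-module. Granting this, Theorem~\ref{theorem1} gives that $R$ is Cohen--Macaulay and Gorenstein as a module over itself---which is exactly the statement that $R$ is a Gorenstein ring.

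The main obstacle is exhibiting $R$ as such a summand. The $R$-action on $\Hom_R(\sr R, R)$ used in the paper is the twisted one $(r\cdot f)(s)=f(sr)$, under which the naive precomposition map $f\mapsto f\circ\varphi$ is not $R$-linear; so the splitting cannot come directly from $\varphi$. I expect the construction to use the totally reflexive structure---notably the biduality $\sr R\cong\Hom_R(\Hom_R(\sr R, R), R)$---together with the ring structure of $\sr R$ through which the twisted action is defined.
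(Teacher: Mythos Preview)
Your approach parallels the paper's through the reduction to total reflexivity via Auslander--Bridger, and you have correctly identified where the difficulty lies. But what you flag as ``the main obstacle'' is in fact the entire content of the converse; everything before it is routine. The paper fills this gap not by producing a direct-summand map but by proving outright that $\Hom_R(\sr R,R)\cong R$ as $R$-modules (with the twisted action), and the mechanism is one you have not anticipated: semi-dualizing modules.

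Write $S$ for $R$ regarded as the target of $\varphi$, set $C=\Hom_R(S,R)$, and view $C$ as an $S$-module. Total reflexivity of $S$ over $R$ yields, via adjunction and biduality,
\[
\Rhom S C C \;\cong\; \Rhom R {\Rhom R S R} R \;\cong\; S,
\]
so $C$ is a \emph{semi-dualizing} $S$-module. For such modules one has the identity $\type S=\mu_S(C)\cdot\type C$. Adjunction also gives $\Rhom S k C \cong \Rhom R k R$, whence $\type C=\type R=\type S$ (the underlying rings being literally equal); this forces $\mu_S(C)=1$. Since the isomorphism $S\cong\Hom_S(C,C)$ shows $\Ann_S C=0$, one concludes $C\cong S$, i.e.\ $\Hom_R(\sr R,R)\cong R$, and Corollary~\ref{maincorollary} (Theorem~\ref{theorem1} with $M=R$) finishes. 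Your instinct that biduality and the ring structure on $\sr R$ matter is right, but the bridge you are missing is the semi-dualizing formalism and its type formula; without it there is no evident way to manufacture the splitting you need.
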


Here $\operatorname{G-dim}$ denotes Gorenstein dimension, a notion
recalled in Section~\ref{sec:two}.

\section{Injective envelopes and change of rings}

In this section we prove a result that tracks injective modules under
finite change of rings.  We write $E_S(M)$ for the injective envelope
of an $S$-module $M$.

\begin{theorem}
\label{injbasechange} 
Let $\psi \colon S \to T$ be a finite homomorphism of noetherian rings. 

For each prime $\mf p $ in $S$ there is  an isomorphism of $T$-modules:
\[
\Hom_S(T,E_S(S/{\mf p})) \cong \bigoplus_{\underset  {\mf q \cap S  = \mf p
 }{\mf q \in \Spec T}}E_T(T/\mf q)
\]
\end{theorem}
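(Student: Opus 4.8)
The plan is to reduce to the case $S$ local, then exploit Matlis duality on the $\mf p$-adic and $\mf q$-adic completions, and finally dualize back. First I would localize at $\mf p$: since $E_S(S/\mf p)$ is already an $S_{\mf p}$-module and $T$ is module-finite over $S$, the module $\Hom_S(T,E_S(S/\mf p))$ is annihilated by no element outside $\mf p$ in its $S$-structure, so $\Hom_S(T, E_S(S/\mf p)) \cong \Hom_{S_{\mf p}}(T_{\mf p}, E_{S_{\mf p}}(S_{\mf p}/\mf p S_{\mf p}))$ as $T_{\mf p}$-modules, and the primes $\mf q$ of $T$ with $\mf q \cap S = \mf p$ correspond exactly to the maximal ideals of $T_{\mf p}$, with $E_T(T/\mf q) \cong E_{T_{\mf p}}(T_{\mf p}/\mf q T_{\mf p})$. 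So I may replace $S$ by the local ring $(S_{\mf p}, \mf p S_{\mf p})$ and $T$ by the semilocal ring $T_{\mf p}$, which is module-finite over it.

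Now with $(S,\mf p)$ local, $E := E_S(S/\mf p)$ is the Matlis injective hull of the residue field, and $\Hom_S(-,E)$ is Matlis duality. The semilocal ring $T$ decomposes after $\mf p$-adic completion as $\wh T \cong \prod_i \wh T_{\mf q_i}$, the product over the (finitely many) maximal ideals $\mf q_i$ of $T$, where $\wh T_{\mf q_i}$ is the completion of $T$ at $\mf q_i$; this is the standard structure theorem for semilocal rings module-finite over a complete (or, via the faithfully flat $S \to \wh S$ and $\Hom_S(T,E) = \Hom_{\wh S}(\wh T, E)$, not-necessarily-complete) local base. The key computation is that Matlis duality over $S$ converts $\wh T$-modules into $\wh T$-modules and sends $E_{\wh T_{\mf q_i}}(\wh T_{\mf q_i}/\mf q_i)$ to a finitely generated module, and conversely. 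Concretely, $\Hom_S(T, E)$ is the Matlis dual over $S$ of $T$ regarded as an $S$-module; decomposing $T \otimes_S \wh S = \wh T = \prod_i \wh T_{\mf q_i}$ and using that $\Hom_S(-,E)$ commutes with finite direct sums gives $\Hom_S(T,E) \cong \bigoplus_i \Hom_{\wh S}(\wh T_{\mf q_i}, E)$, and each summand, being the $S$-Matlis dual of a ring finite over $S$ with a unique maximal ideal lying over $\mf p$, is the $\wh T_{\mf q_i}$-injective hull of its residue field — this is the content of the local duality/Foxby-type identity $\Hom_S(N, E_S(k)) = E_{\wh T_{\mf q_i}}(\ell_i)$ when $N = \wh T_{\mf q_i}$, which one checks by verifying it is injective over $\wh T_{\mf q_i}$ (as $\Hom_S(-,E)$ is exact and $N$ is $S$-flat... no — rather: $N$ is finite over $S$, so its dual is Artinian and one identifies the socle) with simple, essential socle.

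I expect the main obstacle to be the bookkeeping around completion: $S$ is not assumed complete, so one cannot directly invoke the clean structure theory, and one must be careful that $\Hom_S(T,E_S(S/\mf p))$ is insensitive to completing $S$ — this follows because $E_S(S/\mf p)$ is an $\wh S$-module (Matlis hulls are complete modules' duals, or rather $E_S(k) = E_{\wh S}(k)$) so $\Hom_S(T,E) = \Hom_S(T, \Hom_{\wh S}(\wh S, E)) = \Hom_{\wh S}(T \otimes_S \wh S, E) = \Hom_{\wh S}(\wh T, E)$, and then the right-hand side, being a direct sum of injective hulls over the $\wh T_{\mf q_i}$, descends to the asserted direct sum over the $\mf q_i$ because $E_T(T/\mf q_i)$ is itself already a $\wh T_{\mf q_i}$-module equal to $E_{\wh T_{\mf q_i}}(\ell_i)$. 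The remaining step — identifying the $S$-Matlis dual of the complete local ring $\wh T_{\mf q_i}$ with $E_{\wh T_{\mf q_i}}(\ell_i)$ — is the essential point and is exactly the statement that under a finite map of complete local rings, the dualizing/injective module pulls back to the injective hull; I would prove it by a direct socle computation using that $\Hom_S(-, E_S(k))$ is faithfully exact and length-preserving on Artinian modules.
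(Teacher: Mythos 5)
Your proposal is correct, and it takes a genuinely different route from the paper's. The paper avoids completion entirely: after showing $\Hom_S(T, E_S(S/\mf p))$ is $T$-injective (via adjunction) and that every associated prime lies over $\mf p$, it invokes the structure theory of injective modules over noetherian rings to write it as $\bigoplus_{\mf q} E_T(T/\mf q)^{i_\mf q}$ with unknown multiplicities, then localizes at $\mf p$ and computes each $i_\mf q=1$ from the socle isomorphism $\Hom_T(l, \Hom_S(T, E_S(k))) \cong \Hom_k(l,k)$. You instead localize first, pass to the $\mf p$-adic completion via $\Hom_S(T,E) \cong \Hom_{\wh S}(\wh T, E)$ (legitimate because $E=E_{\wh S}(k)$ and $T\otimes_S\wh S = \wh T$ by finiteness), split $\wh T$ as a finite product of complete local rings $\wh T_{\mf q_i}$, and identify each Matlis dual $\Hom_{\wh S}(\wh T_{\mf q_i},E)$ directly with $E_{\wh T_{\mf q_i}}(\ell_i)$ by checking it is Artinian, injective, with simple essential socle. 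The essential socle calculation is the same in both arguments; the difference is the surrounding apparatus. The paper's version is lighter (no completion, no decomposition of semilocal complete rings, just the standard decomposition of injectives and a count of associated primes), while yours is more structural and clarifies \emph{why} the multiplicities are one: each summand is literally the injective hull of the residue field of a complete local ring, by Matlis theory. Two points you should tighten: (i) you correctly caught the flatness slip mid-proposal, and the replacement --- $\wh T_{\mf q_i}$ is module-finite over $\wh S$, so its Matlis dual is Artinian with socle $\Hom_{\wh S}(\ell_i, E) \cong \Hom_k(\ell_i,k)$, a one-dimensional $\ell_i$-space --- is exactly what is needed; (ii) the descent from $\wh T$-modules to $T$-modules at the end should note that $E_T(T/\mf q_i) = E_{T_{\mf q_i}}(\ell_i) = E_{\wh{T_{\mf q_i}}}(\ell_i)$ already carries the correct $T$-structure via restriction along $T \to \wh T_{\mf q_i}$, so the two direct sums agree as $T$-modules.
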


\begin{proof}
 The $T$-module  $\Hom_S(T,E_S(S/{\mf p}))$ is  injective, since one has 
 \begin{align*}
 \Hom_T(-,\Hom_S(T,E_S(S/{\mf p}))) & \cong \Hom_T(- \otimes_T T,E_S(S/{\mf p})) \\ & \cong \Hom_S(-,E_S(S/{\mf p}))
 \end{align*}

Choose $\mf q$ in $\Ass_T \Hom_S(T,E_S(S/{\mf p}))$. The $S$-module $T$ is finite,
so one  has 
\[
\mf q \cap S \in  \Ass_ S \Hom_S(T,E_S(S/{\mf p})) = 
\operatorname {Supp}_S T \cap \Ass_S E_S(S/{\mf p}) = \lbrace \mf p \rbrace
\]
We have proved  $\mf q \cap S =\mf p$. Set $A= \lbrace \mf q \in \Spec T \
\vert \ \mf q \cap S = \mf p \rbrace$. The structure theorem for injective
modules over noetherian rings yields numbers $i_{\mf q}$ such that
\[
\Hom_S(T,E_S(S/{\mf p})) \cong \bigoplus_{\mf q \in A} E_T(T/{\mf q})^{i_{\mf q}}
\]

It remains to show that $i_{\mf q}= 1$ for all $\mf q \in A$.
Setting $k(\mf q) = T_{\mf q}/{\mf q T_{\mf q}}$, one has 
\[
i_{\mf q} = \rank_{ k(\mf q)} \Hom_{T_{\mf q}}\big({k(\mf q)}, \Hom_S(T,E_S(S/{\mf p}))_{\mf q}\big)
\]

The isomorphism $E_S(S/{\mf p}) \cong E_{S_{\mf p}}(S_{\mf p}/{\mf p
S_{\mf p}})$ yields the first isomorphism below:
\begin{align*}
\Hom_S(T,E_S(S/{\mf p}))_{\mf q} & \cong \Hom_S(T,E_{S_{\mf p}}(S_{\mf p}/{\mf p S_{\mf p}}))_{\mf q} \\ & \cong \Hom_{S_{\mf p}}(T_{\mf p},E_{S_{\mf p}}(S_{\mf p}/{\mf p S_{\mf p}}))_{\mf q}
\end{align*}
The second isomoprhism is due to finitness of $T$  over $S$. Thus, we
may assume that $S$ is local with maximal ideal $\mf p$, and $\mf q $
is  maximal in $T$.

Set $k=S/{\mf p}$ and $l=T/\mf q$.  One then has isomorphisms of
$l$-vector spaces
 \begin{align*}
\Hom_T(l,\Hom_S(T,E_S(k)) & \cong \Hom_S(l,E_S(k))
\\ & \cong \Hom_S(l,k)
\\ & \cong\Hom_k(l,k))
 \end{align*}
Here the first isomorphism holds by adjointness, the second one because
maps from $l$ to $E_S(k)$ factor through $k$, the last one because
$S$-linear maps from $l$ amnnihilate $\mf q$.  Since $\rank_k l$ is
finite, we get $\rank_k(\Hom_T(l,\Hom_S(T,E_S(k)))=1$.
 \end{proof}

\begin{remark}
Every injective $S$-module $I$ is a direct sum of modules of the form
$E(S/\mf p)$, and the functor $\Hom_{S}(T,-)$ commutes with direct sums,
so Theorem~\ref{injbasechange} provides
a direct sum decomposition of the injective $T$-module $\Hom_{S}(T,I)$.
This result may be compared to Foxby's computation of the injective
dimension of $T \otimes_S I$, when $\psi\colon S\to T$ is a flat local
homomorphism; see \cite[Theorem 1]{Fo}.
 \end{remark}

\begin{remark}
Theorem~\ref{injbasechange} may fail when $\psi$ is
not finite, even if it is flat. Indeed, let $S$ be a field and $T$ an
infinite field extension of $S$ with a countable basis. One then has $E_S(S)
= S$, and hence the $S$-vector space  $\Hom_S(T,E_S(S)) $ does not have
a countable basis. Thus, it cannot be isomorphic to the $S$-vector space
$E_T(T) =T$.  \end{remark}

\section{Gorenstein Modules}
\label{sec:two}

In this section we prove Theorem~\ref{theorem1}, by way of Lemmas~\ref{freeinj} and
\ref{localcohomology} below. 

Given an $R$-module $M$, we view $M\otimes_{R} \sr R$ as an $R$-module with $r\cdot
(x\otimes s) = x \otimes sr$.  Recall that $\Hom_R(\sr R,N)$ is to be viewed as an
$R$-module with $(r\cdot f)(s) = f(sr)$.

J.~Herzog~\cite[(4.3), (5.1)]{H} has proved the following result in the special case when
$\varphi$ is the Frobenius endomorphism, and the modules $M$ and $N$ are assumed to be
isomorphic to $M \otimes_R \sr R $ and $\Hom_R(\sr R,N)$, respectively.

\begin{lemma}
\label{freeinj}
 Let $\varphi \colon R \to R$ be a finite contracting endomorphism. Let $M$ be a noetherian $R$-module and $N$  an artinian $R$-module. The following statements hold:
\begin{enumerate}[\quad\rm(a)]
\item $M$ is free if and only if $M$ is a direct summand of $M \otimes_R \sr R $;
\item $N$ is injective if and only if $N$ is a direct summand of $\Hom_R(\sr R,N)$.
\end{enumerate}
\end{lemma}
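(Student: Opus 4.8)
The plan is to reduce both statements to results of Avramov--Iyengar--Miller on contracting endomorphisms, in particular to the fact that $\sr R$ has finite flat dimension over $R$ if and only if $R$ is regular, together with their acyclicity criteria. For part (a), the ``only if'' direction is immediate: if $M$ is free then $M\otimes_R \sr R$ is a direct sum of copies of $\sr R$, and $M\cong R^n$ is visibly a direct summand only in trivial cases---so in fact the substantive content is that a direct summand hypothesis, not an isomorphism hypothesis, already forces freeness. For the ``if'' direction I would localize and complete to assume $(R,\mf m,k)$ is complete local, pass to a minimal free resolution $F_\blt \to M$, and exploit that $M$ being a direct summand of $M\otimes_R \sr R$ makes the multiplication-type map a split injection. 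The key input is the theorem of Avramov--Iyengar--Miller \cite{AIM} that for a contracting endomorphism $\varphi$, a finite module $M$ is free precisely when $\Tor^R_i(\sr R, M)=0$ for $i>0$ (the analogue of the Peskine--Szpiro/Herzog acyclicity theorem for Frobenius); one shows the splitting forces this vanishing by a length or minimality argument, comparing $M\otimes_R\sr R$ with its summand $M$ via the filtration of $\sr R$ by powers of $\mf m$.

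For part (b), the plan is to dualize part (a) using Matlis duality. Let $(R,\mf m,k)$ be complete local (the artinian hypothesis on $N$ lets us reduce to this case after noting $\Hom_R(\sr R, -)$ and the summand property are compatible with the relevant completions), and set $(-)^\vee = \Hom_R(-, E_R(k))$. The crucial compatibility is the natural isomorphism $\Hom_R(\sr R, N)^\vee \cong N^\vee \otimes_R \sr R$ of $R$-modules, which follows from Matlis duality together with the fact that $\sr R$ is a finite $R$-module (so that $\Hom$ and $\otimes$ against it interact well with the duality, up to the twist recorded before the lemma statement). Since $N$ is artinian, $N^\vee$ is noetherian, and $N$ is injective if and only if $N^\vee$ is free; likewise $N$ is a direct summand of $\Hom_R(\sr R, N)$ if and only if, applying $(-)^\vee$, the module $N^\vee$ is a direct summand of $N^\vee\otimes_R \sr R$. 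Thus part (b) for $N$ is exactly part (a) for $M = N^\vee$, and we are done.

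The main obstacle I anticipate is bookkeeping the $R$-module structures correctly: the actions on $M\otimes_R\sr R$ and on $\Hom_R(\sr R,N)$ are the ``outer'' ones (twisting the free variable, not via $\varphi$ on the Hom/tensor factor), and one must check that the natural evaluation map $M\to \Hom_R(\sr R, M\otimes_R\sr R)$ or the candidate splitting $M\otimes_R\sr R\to M$ is genuinely $R$-linear for these twisted structures, and that the Matlis duality isomorphism in part (b) respects them. A second, more technical point is justifying the reduction to the complete local case for part (b): one should verify that an artinian module over $R$ is the same as an artinian module over $\wh R$, that $\Hom_R(\sr R, N)\cong \Hom_{\wh R}(\srh{\wh R}, N)$ for such $N$, and that $\srh{\wh R}$ is again finite and contracting over $\wh R$, so that the cited theorem of \cite{AIM} applies in the form needed. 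Once these structural checks are in place, the argument is a short combination of Matlis duality with the known freeness criterion.
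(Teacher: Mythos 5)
For part (a) your plan has a genuine gap. You propose to invoke a theorem of Avramov--Iyengar--Miller to the effect that $\Tor^R_i(\sr R,M)=0$ for $i>0$ forces $M$ free, and then assert that ``the splitting forces this vanishing by a length or minimality argument.'' That last step is the whole content of the implication and you have not supplied it: the hypothesis is about $M$ being a summand of the zeroth Tor, and nothing you say explains how to propagate this to the higher $\Tor$'s. (Also the AIM-style theorem you quote is the Kunz/Peskine--Szpiro circle of ideas about \emph{finite flat dimension}; over a possibly singular $R$ this does not directly give freeness of a finite module from $\Tor$-vanishing against $\sr R$, so even the cited black box is not the right one.) The paper's argument is more elementary and avoids any Tor-rigidity input: from the summand hypothesis, counting minimal generators shows $M\cong M\otimes_R\sr R$; if $F_1\xrightarrow{\theta}F_0\to M\to 0$ is a minimal presentation then $\theta\otimes_R\sr R$ is a minimal presentation of $M\otimes_R\sr R$ whose matrix has entries $\varphi(a_{ij})$, so the Fitting-type ideal $I(\theta)$ satisfies $I(\theta)=\bigl(\varphi(I(\theta))\bigr)$; iterating and using $\varphi^i(\mf m)\subseteq \mf m^2$ drives $I(\theta)$ into $\bigcap_n\mf m^n=0$ by Krull's intersection theorem, so $\theta=0$ and $M$ is free. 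You should look for an argument of this type rather than routing through $\Tor$-rigidity. Incidentally, your remark that ``$M\cong R^n$ is visibly a direct summand only in trivial cases'' has the module structure backwards: with the outer $R$-action on $M\otimes_R\sr R$ one has $R\otimes_R\sr R\cong R$ as $R$-modules, so $M$ free gives $M\otimes_R\sr R\cong M$, and the ``only if'' direction is genuinely trivial.

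For part (b) your plan matches the paper: dualize by $(-)^\vee=\Hom_R(-,E_R(k))$, use $\Hom_R(\sr R,N)^\vee\cong \sr R\otimes_R N^\vee$ (the paper cites Rotman for this), apply part (a) to $N^\vee$, and then handle a general local $R$ by $\mf m$-adic completion, using that $\Hom_R(\sr R,N)\cong\Hom_{\wh R}(\srh\wh R,N)$ because $\varphi$ is finite. The one small difference is that the paper proves the ``only if'' direction of (b) directly from Theorem~\ref{injbasechange} (the injective base-change formula), whereas you propose to dualize the ``only if'' of (a); both are fine, but the duality route requires passing to the complete case even in that easy direction. Your flagged ``main obstacle'' about bookkeeping module structures is exactly right and does need the care you anticipate, but it is manageable once the Fitting-ideal argument for (a) is in place.
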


\begin{proof}
(a) The ``only if''  part is obvious. For the converse, let $F_1 \xra {\theta} F_0 \to M \to 0$ be a minimal free presentation. It induces a minimal   free presentation
\[
F_1 \otimes_R \sr R \xra {\theta \otimes_R \sr R}  F_0 \otimes_R \sr R \to M \otimes_R \sr R \to 0
\]
Thus, $M$ and $M \otimes_R \sr R$ have the same minimal number of
generators, and this implies  $M \cong M \otimes_R \sr R$, since $M$
is a direct summand of $M \otimes_R \sr R$.

Let $I(\theta)$ be the ideal generated by the entries of a matrix
representing $\theta$. It is contained in $\mf m$, and one has $I (\theta)=I(\theta \otimes_R
\sr R)=\varphi(I(\theta))$. Let $i$ be a positive integer such that
$\varphi^i(\mf m) \subseteq \mf m^2$. Thus, for every positive integer $n$
one has $I(\theta)= \varphi^{in}(I(\theta)) \subseteq \mf m^{2n}$. Krull's
Intersection Theorem now implies $I(\theta) = 0$, so $M$ is free.

(b)  Every artinian injective module is a finite direct sum of  the
injective envelope of the residue field, and the finiteness of $\varphi$
implies
\[
\lbrace \mf q \in \Spec R \ \vert \ \varphi^{-1}(\mf q) =
\mf m \rbrace =\lbrace \mf m \rbrace\,.
\]
Thus, the  ``only if''  part follows from Theorem~\eqref{injbasechange}.

Conversely, suppose $\Hom_R(\sr R,N) \cong N \oplus N'$ for some $R$-module
$N'$.  Assume first that $R$ is complete and set $(-)^\vee = \Hom_R(-,E_R(R/{\mf m}))$.
One then has
\[
N^\vee \oplus ({N'})^\vee \cong (N \oplus N')^{\vee} \cong \Hom_R(\sr R,N)^{\vee} 
\cong \sr R \otimes_RN^{\vee}
\]
where the last  isomorphism holds by \cite[(3.60)]{R}. Thus, part (a) shows that
$N^\vee$ is  free. This implies that $ N^{\vee \vee}$ is injective. The
natural map $N \to N^{\vee\vee}$  of $R$-modules is an isomorphism,
by Matlis Duality~\cite[(3.2.12)]{BH}, so $N$ is injective.

For a general local ring $R$, let $\ \widehat{}\ $  denote the functor
of $\mf m$-adic completion.  The finiteness of $\varphi$ implies that
the $R$-module $\Hom_R( \sr R,N)$ is artinian along with $N$.  Thus,
both modules have natural structures of $\widehat R$-modules, and the
second one is a direct summand of the first.  For it we have isomorphisms
\begin{align*}
\Hom_R( \sr R,  N)
&\cong\Hom_R(\sr R, \Hom_{\widehat R}(\widehat R, N))\\
&\cong\Hom_{\widehat R}(\sr  R \otimes_R \widehat R,  N)\\
& \cong \Hom_{\widehat R}(\srh \widehat R,  N)\,;
\end{align*}
the last one is induced by $ \srh \widehat R \cong \sr  R \otimes_R \widehat R $, 
which is due to the finiteness of $\varphi$.

{}From the already settled case of complete rings we conclude that $N$ is an
injective $\wh R$-module.  Adjointness yields an isomorphism $\Hom_R(-,  N)\cong
\Hom_{\widehat R}( \widehat R \otimes_R - ,  N)$, which shows that it is
injective over $N$, as well.
 \end{proof}

Let $\gam(M)$ denote the $\mf m$-torsion submodule,
$\cup_{i\geqslant 0}(0:_{M}{\mf m}^{i})$, of an $R$-module $M$.
 
\begin{lemma}
\label{localcohomology}
Let $\psi \colon (R,\mf m) \to (S,\mf n) $ be a finite local homomorphism of local rings.

 If  $N$ is a finite $S$-module, then for every $R$-module $M$ one has an isomorphism of $S$-modules
\[
\Hom_R(N,\gam(M)) \cong \gam[\mf n](\Hom_R( N,M))
\]
\end{lemma}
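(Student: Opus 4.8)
The plan is to exhibit $\Hom_R(N,\gam(M))$ as an $S$-submodule of $\Hom_R(N,M)$ and to identify it, on the nose, with the $\mf n$-torsion submodule. First I would record that, since $\psi$ is finite, $S$ is a finite $R$-module, and hence so is $N$; this finiteness of $N$ over $R$ is the one hypothesis doing real work. The inclusion $\gam(M)\hookrightarrow M$ induces, by left-exactness of $\Hom_R(N,-)$, an injective $S$-linear map $\Hom_R(N,\gam(M))\to\Hom_R(N,M)$ whose image is exactly $\{f\in\Hom_R(N,M) : f(N)\subseteq\gam(M)\}$. So it suffices to show that this set equals $\gam[\mf n](\Hom_R(N,M))$.

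I would describe both sides as unions of sub-Hom-groups attached to two filtrations of $N$. Since $(s\cdot f)(n)=f(sn)$ for $s\in S$, one has $\mf n^j f=0$ if and only if $f$ vanishes on $\mf n^j N$, so $\gam[\mf n](\Hom_R(N,M))=\bigcup_{j\ge 0}\{f : f(\mf n^j N)=0\}$. On the other side, for any $f$ the image $f(N)$ is a finite $R$-module, and a finite $R$-module is contained in $\gam(M)$ precisely when some single power of $\mf m$ annihilates it; since $\mf m^i f(N)=f(\mf m^i N)$ by $R$-linearity, this gives $\{f : f(N)\subseteq\gam(M)\}=\bigcup_{i\ge 0}\{f : f(\mf m^i N)=0\}$.

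It then remains to show that the descending chains $(\mf m^i N)_i$ and $(\mf n^j N)_j$ of submodules of $N$ are cofinal in one another. One inclusion is immediate: $\psi(\mf m)\subseteq\mf n$ forces $\mf m^i N\subseteq\mf n^i N$ for all $i$. For the other I would use that $S/\mf m S$ is a finite-dimensional $R/\mf m$-algebra, hence an Artinian local ring with maximal ideal $\mf n/\mf m S$; thus $\mf n^c\subseteq\mf m S$ for some $c$, and therefore $\mf n^{ci}N\subseteq(\mf m S)^iN=\mf m^iN$ for every $i$. With this cofinality, the two unions above coincide as $S$-submodules of $\Hom_R(N,M)$, which is the asserted isomorphism.

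The argument is mostly bookkeeping; the step I expect to be the crux — and the only point at which finiteness of $\psi$ is genuinely used — is this cofinality, i.e. the fact that $\mf n$ and $\mf m S$ have the same radical in $S$, which is what makes the $\mf m$-adic and $\mf n$-adic topologies on $N$ agree. The remaining care needed is simply to carry the $S$-module structure on $\Hom_R(N,M)$ through every step, so that the final identification is $S$-linear and not merely additive.
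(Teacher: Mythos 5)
Your proof is correct and follows essentially the same route as the paper's: identify both sides inside $\Hom_R(N,M)$, use finiteness of $N$ over $R$ to control $f(N)$, and exploit $\psi(\mf m)\subseteq\mf n$ together with $\mf n^c\subseteq\mf mS$ (the latter via Artinianness of $S/\mf mS$). Framing the two inclusions as cofinality of the filtrations $(\mf m^iN)$ and $(\mf n^jN)$ is a slightly cleaner packaging of the same argument, and in particular it correctly pinpoints that the relevant finiteness is of $f(N)$ over $R$ (the paper's phrasing "Since $M$ is finite over $R$" at that step appears to be a slip for this).
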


\begin{proof}
Let 
$\beta \colon \gam(M) \to M$ be the inclusion map. Consider the  maps
\begin{equation*}
\xymatrix{
\gam(\Hom_R( N,M)) \ar[r]^{\alpha} & \Hom_R( N,M)  & & \Hom_R(N,\gam(M)) \ar[ll]_{\Hom_R(N,\beta)} }
\end{equation*}
where $\alpha$ is the inclusion.  Let  $U$ and $V$ be the images of
$\alpha$ and $\Hom_R(N,\beta)$, respectively. These maps are injective,
so  it is enough to show  $U=V$.  Let $\beta \circ f $ be an element
of $V$. Since $M$ is finite over $R$, there  exists a positive integer
$m$ such that $\mf m^m(\beta \circ f)(x)=0$ for each $x \in N$.  Thus,
one has $\psi(\mf m^m)(\beta \circ f)(x)=0$  for each $x \in  N$. Since
the length of $S/{\mf mS}$ is finite, there exists a positive integer
$n$ such that $\mf n^n \subseteq \mf mS$.  Therefore, one has $\mf
n^{mn}(\beta \circ f)=0$, hence $\beta \circ f $ is in $U$.

Now let $g$ be an element of $U$. There exits an integer $n$ such that
$\mf n^n g = 0$, because $N$ is finite over $S$.  Since $\psi$ is local,
one has $\psi(\mf m^n)\subseteq \mf n^n$, hence $\mf m^n g(x)= \psi(\mf
m^n) g(x) =0$, for all $x\in  N$ and this implies that $g$ is in $V$.
\end{proof}

We also need the following special case of \cite[(2.5.8)]{EGA}:

\begin{chunk}
\label{completion}
If $M$ and $N$ are finite $R$-modules, then $\widehat M \cong \widehat N$ as $\widehat R$-modules implies
$M \cong N$ as $R$-modules.
\end{chunk}

The next theorem is Theorem~\ref{theorem1} from the introduction.

\begin{theorem}
\label{maintheorem}
Let $\varphi \colon R \to R$ be a finite contracting endomorphism and $M$ a finite
$R$-module. The following are equivalent:
\begin{enumerate}[\quad \rm (a)]
\item  $R$ is Cohen-Macaulay and $M$ is Gorenstein.
\item $M\cong \Hom_R(\sr R,M)$  and one has $\Ext^i_R(\sr R,M) = 0$ for $i \geq 1$.
\item $M$ is a direct summand of $\Hom_R(\sr R,M)$ as an $R$-module, and one has
  $\Ext^i_R(\sr R,M) = 0$ for $1 \leq i \leq \depth M$.
\end{enumerate}
\end{theorem}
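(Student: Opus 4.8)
The plan is to prove the cycle of implications (a) $\Rightarrow$ (b) $\Rightarrow$ (c) $\Rightarrow$ (a), reducing almost everything to the two lemmas and a dévissage on $\depth M$. First I would dispose of the easy direction (b) $\Rightarrow$ (c): this is immediate, since an isomorphism $M\cong\Hom_R(\sr R,M)$ makes $M$ a direct summand of $\Hom_R(\sr R,M)$, and the Ext-vanishing in (b) is stronger than that required in (c). So the two real tasks are (a) $\Rightarrow$ (b) and (c) $\Rightarrow$ (a).

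For (a) $\Rightarrow$ (b): assume $R$ is Cohen–Macaulay and $M$ is maximal Cohen–Macaulay of finite injective dimension. Since $\varphi$ is finite, $\sr R$ is a finite $R$-module; I would like to say $\sr R$ has finite projective dimension and hence, being a maximal Cohen–Macaulay ring, $\sr R$ is free up to finite free resolution — but of course $\sr R$ need not be free. The right statement to invoke is that a contracting endomorphism makes $\sr R$ have finite \emph{flat} dimension over $R$; combined with the Auslander–Buchsbaum-type equality and $R$ Cohen–Macaulay, $\pd_R \sr R = \depth R - \depth \sr R = 0$, so in fact $\sr R$ is a projective, hence free, $R$-module of some rank $r$. (This is exactly the kind of conclusion Lemma~\ref{freeinj}(a) is built to deliver once one knows $\sr R$ is a summand of $\sr R\otimes_R\sr R$, which follows from $\pd_R\sr R<\infty$ over the Cohen–Macaulay ring $R$.) Once $\sr R\cong R^r$, one gets $\Ext^i_R(\sr R,M)\cong \Ext^i_R(R^r,M)=0$ for $i\ge 1$ and $\Hom_R(\sr R,M)\cong M^r$; it then remains to pin down $r=1$. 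For this I would localize and complete, passing to the situation where I can compare socles or use $\Hom_R(\sr R,E_R(k))$: by Theorem~\ref{injbasechange} applied to $\varphi$ (whose only prime over $\mf m$ is $\mf m$), $\Hom_R(\sr R, E_R(k))\cong E_R(k)$, i.e.\ the relevant multiplicity is $1$, forcing $r=1$. Then $\Hom_R(\sr R,M)\cong M$ and we use \ref{completion} if we descended to the completion.

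For (c) $\Rightarrow$ (a), which I expect to be the main obstacle, the plan is induction on $d=\depth M$. When $d=0$: $M$ is a summand of $\Hom_R(\sr R,M)$ with no Ext-conditions imposed; I would pass to $\gam(M)$ and use Lemma~\ref{localcohomology} with $\psi=\varphi$ and $N=\sr R$ to get $\Hom_R(\sr R,\gam(M))\cong\gam(\Hom_R(\sr R,M))$, reducing to the artinian case, and then apply Lemma~\ref{freeinj}(b) to conclude $\gam(M)$ — and via a completion/Matlis argument $M$ itself in the relevant sense — is injective; combined with $\depth M=0$ this pins down $\dim R=0$ and $M$ Gorenstein. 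For the inductive step, choose an $M$-regular and $R$-regular element $x\in\mf m\smallsetminus\mf m^2$ avoiding the relevant associated primes; I would show that the hypotheses of (c) descend to $R/xR$ and $M/xM$ with respect to the induced contracting endomorphism $\bar\varphi$ on $R/xR$ (here one uses that $\varphi$ contracting and $x$ chosen suitably makes $\bar\varphi$ well-defined and contracting, and that $\Ext^i_R(\sr R,M)=0$ for $1\le i\le d$ gives, via the long exact sequence for $0\to M\xrightarrow{x}M\to M/xM\to 0$ and a change-of-rings isomorphism $\sr R\otimes_R R/xR\cong \up{\bar\varphi}(R/xR)$ plus $\Tor$-vanishing, the vanishing $\Ext^i_{R/xR}(\up{\bar\varphi}(R/xR),M/xM)=0$ for $1\le i\le d-1=\depth_{R/xR}(M/xM)$, and similarly that $M/xM$ is a summand of $\Hom_{R/xR}(\up{\bar\varphi}(R/xR),M/xM)$). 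By induction $R/xR$ is Cohen–Macaulay and $M/xM$ is Gorenstein, whence $R$ is Cohen–Macaulay and $M$ is maximal Cohen–Macaulay of finite injective dimension, i.e.\ Gorenstein.

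The delicate points I anticipate are: (i) verifying that the Ext-hypothesis in (c), which only runs up to $\depth M$, is exactly enough to push the summand property and the truncated Ext-vanishing through the element $x$ — the change-of-rings spectral sequence or the relevant $\Tor$-$\Ext$ juggling must be controlled carefully, and this is where the precise range $1\le i\le\depth M$ is used; and (ii) making the base case $\depth M=0$ genuinely conclude Gorensteinness of $M$ and Cohen–Macaulayness of $R$ — the cleanest route is to combine Lemma~\ref{localcohomology} and Lemma~\ref{freeinj}(b) to see $\gam(M)$ is injective, then use that $M$ has depth $0$ together with $M$ finite to get $M=\gam(M)$ is injective of finite length, forcing $\dim R=0$; here completion and \ref{completion} handle the passage between $R$ and $\widehat R$ cleanly, exactly as in the proof of Lemma~\ref{freeinj}(b).
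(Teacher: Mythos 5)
Your proposal diverges from the paper at both of the substantive implications, and in each case there is a genuine gap.

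For (a)$\implies$(b), the key claim you invoke --- that a contracting endomorphism makes $\sr R$ have finite flat dimension over $R$ --- is false, and indeed runs exactly backwards. By the Kunz-type rigidity theorem (for Frobenius this is Kunz's theorem, and for general contracting endomorphisms it is in \cite{AIM}, the very reference cited in the introduction), finiteness of the flat dimension of $\sr R$ forces $R$ to be \emph{regular}. So if your argument ran, it would prove every Cohen--Macaulay ring with a finite contracting endomorphism and a Gorenstein module is regular, which is absurd. The paper avoids this entirely: after completing, one has a canonical module $\omega$, Sharp's theorem gives $M\cong\omega^n$, the Ext-vanishing comes from $\sr R$ being maximal Cohen--Macaulay (so $\Ext^i_R(\sr R,\omega)=0$ for $i\geq 1$ by \cite[(3.3.3)]{BH}), and $\Hom_R(\sr R,\omega)$ is again a canonical module, hence isomorphic to $\omega$, by \cite[(3.3.7)]{BH}. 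No freeness of $\sr R$ is needed or available.

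For (c)$\implies$(a), your induction on $\depth M$ by killing a regular element $x$ faces two problems. First, you need the endomorphism to descend to $R/xR$, i.e.\ $\varphi(x)\in xR$, and for a general contracting endomorphism there is no reason any $R$- and $M$-regular element with this property exists --- this is a special feature of Frobenius, where $\varphi(x)=x^p\in xR$ automatically. Second, your base case reasoning ``$\depth M=0$ together with $M$ finite gives $M=\gam(M)$'' is wrong: a finitely generated module of depth $0$ over a local ring of positive dimension is not $\mf m$-torsion (it merely has $\mf m$ among its associated primes). The paper sidesteps both difficulties by not inducting at all. It takes a minimal injective resolution $I^*$ of $M$, uses the Ext-vanishing in the range $1\le i\le g=\depth M$ to see that $\Hom_R(\sr R,I^*)$ is a truncated injective resolution of $\Hom_R(\sr R,M)$, uses the direct summand hypothesis to realize $I^*$ as a summand of an injective resolution of $\Hom_R(\sr R,M)$, applies $\gam$ together with Lemma~\ref{localcohomology} and the vanishing $\gam(I^i)=0$ for $i<g$, and concludes that $\lch g(M)$ is a direct summand of $\Hom_R(\sr R,\lch g(M))$. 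Lemma~\ref{freeinj}(b) (applied to the artinian module $\lch g(M)$) then shows $\lch g(M)$ is injective, and the criterion \cite[(3.9)]{FFGR} gives at once that $M$ is Gorenstein and $R$ is Cohen--Macaulay. You would do well to note this FFGR criterion as the engine of the implication; your plan supplies no replacement for it.
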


As a corollary, we characterize Gorenstein rings among rings admitting finite contracting
endomorphisms:

\begin{corollary}
  \label{maincorollary}
The ring $R$ is Gorenstein if and only if one has $\Hom_R(\sr R,R)\cong  R$ 
and $\Ext^i_R(\sr R,R) = 0$ for all  $1\leq i \leq \depth R$.
  \qed
\end{corollary}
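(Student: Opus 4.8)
The plan is to deduce the corollary from Theorem~\ref{maintheorem} by specializing to the module $M=R$ and reading off what conditions (a), (b), (c) of that theorem say in this case. The key translation is that, for $M=R$, condition (a) asserts precisely that $R$ is a Gorenstein ring: indeed $R$ is maximal Cohen--Macaulay over itself exactly when $R$ is a Cohen--Macaulay ring, while a local ring is Gorenstein if and only if it is Cohen--Macaulay with finite self-injective dimension (and, as the definition of a Gorenstein module already builds in the Cohen--Macaulay condition, the ``$R$ is Cohen--Macaulay'' clause in (a) is automatic here). Once this is in place, the two implications of the corollary follow for free from the implications (a) $\Rightarrow$ (b) and (c) $\Rightarrow$ (a) of the theorem.

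For the ``only if'' direction, assume $R$ is Gorenstein. Then condition (a) of Theorem~\ref{maintheorem} holds with $M=R$, hence so does condition (b); it gives $\Hom_R(\sr R,R)\cong R$ together with $\Ext^i_R(\sr R,R)=0$ for all $i\ge 1$. Restricting the vanishing to the range $1\le i\le\depth R$ yields exactly the asserted conditions.

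For the converse, assume $\Hom_R(\sr R,R)\cong R$ and $\Ext^i_R(\sr R,R)=0$ for $1\le i\le\depth R$. The isomorphism in particular exhibits $R$ as a direct summand of $\Hom_R(\sr R,R)$, and $\depth M=\depth R$ when $M=R$, so condition (c) of Theorem~\ref{maintheorem} holds with $M=R$. By the theorem, condition (a) holds: $R$ is Cohen--Macaulay and $\id_R R$ is finite, so $R$ is Gorenstein.

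I do not expect a genuine obstacle here, since all the content resides in Theorem~\ref{maintheorem}. The only point requiring a moment's care is that the hypotheses of the corollary form a hybrid of conditions (b) and (c)---the isomorphism $\Hom_R(\sr R,R)\cong R$ is the one appearing in (b), while the restricted range $1\le i\le\depth R$ is the one appearing in (c)---so the two directions must be routed through the two different implications of the theorem rather than through a single equivalence.
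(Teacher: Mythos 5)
Your proof is correct and is exactly the argument the paper intends; the paper marks the corollary with \qed as an immediate consequence of Theorem~\ref{maintheorem} applied to $M=R$. Your observation that the corollary's hypotheses mix the isomorphism from condition (b) with the truncated vanishing range from condition (c), so that the two directions must be routed through (a)$\Rightarrow$(b) and (c)$\Rightarrow$(a) respectively, is a precise and worthwhile clarification of why the deduction works.
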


This result above was proved by Goto~\cite[(1.1)]{G} when $\varphi$ is the Frobenius
endomorphism of a local ring of positive characteristic.  The plan of our proof is similar
to that of Goto's, but new ideas are required to implement it. One difficulty is that a
contracting endomorphism need not induce a bijection on $\operatorname{Spec}(R)$.

\begin{Remark}
The implication (c)$\implies$(a) may fail if $\varphi$ is not
contracting. For example, when $\varphi$ is the identity map, (c) holds
for every ring $R$.

Also, in part (c)  the  conditions on the  $\Hom$ and $\Ext$ modules are
independent. Indeed, if $M$ is a module of depth zero, then obviously
$M$ satisfies the condition on $\Ext$ modules, but it need not  be
Gorenstein. Moreover, Goto in \cite{G} provides an example showing
that $\Hom_R(\sr R,R) \cong R$ does not imply that $R$ is Gorenstein.
\end{Remark}

\begin{mainthmproof}
(a) $\implies$ (b): 
One  has an isomorphism of $\widehat R$-modules
\[
\widehat R  \otimes_R \Ext^i_R(\sr R,M) \cong \Ext^i_{\widehat R} (\srh \widehat R,\widehat M)
\]
Thus, $\widehat M\cong \Hom_{\widehat R} (\srh \widehat R,\widehat M)$ holds  if and only if  $M\cong \Hom_R(\sr R,M)$ holds; see \eqref{completion}. Moreover, one has $\ext i {\widehat R} {\srh \widehat R} {\widehat M}=0$ if and only if $\ext i R {\sr R} M =0$. So, we may assume that $R$ is complete, and hence that it admits a canonical module, say, $\omega$.

Since $M$ is a Gorenstein module, one has $ M \cong \omega^n$ for some positive integer $n$; see  \cite[(2.7)]{S2}. The $R$-module $\sr  R $ is maximal Cohen-Macaulay so \cite[(3.3.3)]{BH} yields:
\[
\ext i R {\sr R} M \cong  \ext i { R}  {\sr R} { \omega } ^n =0 \quad  \text{for all}  \quad i \ge 1
\]
Moreover, one has 
\[ 
\Hom_R(\sr R, M) \cong \Hom_{ R}(\sr R,\omega)^n
\]
The $R$-module $\Hom_{ R}(\sr R,\omega)$ is a canonical module for $ R$, see \cite[(3.3.7)]{BH}. Since
canonical modules are unique up to isomorphism, by \cite[(3.3.4)]{BH}, one has an
isomorphism of $ R$-modules $\Hom_{ R}(\sr R,\omega) \cong \omega$. Thus, one
obtains 
\[
\Hom_R(\sr R,M) \cong \omega^n \cong M\,.
\]

(b) $\implies$ (c) is obvious.

(c) $\implies$ (a): Set $g = \depth M$. It is enough to show that $\lch{g}(M)$, the $g$th
local cohomology of $M$ with respect to $\mf m$, is injective; see \cite[(3.9)]{FFGR}.
Fix a minimal injective resolution of $M$ over $R$, say
\[
I^*=\quad
0 \arr I^0 \xra {d^0} I^1 \arr \cdots \arr I^g \xra {d^g} I^{g+1} \arr \cdots
\]
Since $\Ext^i_R(\sr R,M) = 0$ for all $1 \leq i \leq s$, the induced sequence of
$R$-modules
\begin{align*}
\label{injres}
0 \arr \Hom_R(\sr R,I^0) \xra {d^0_{\ast}} \Hom_R(\sr R,I^1) \arr \cdots \arr \Hom_R(\sr R,I^g) \xra
{d^g_{\ast}} \Hom_R(\sr R,I^{g+1})
\end{align*}
has non-zero homology only in degree $0$, where it is equal to $\Hom_R(\sr R,M)$.  As each
$\Hom_R(\sr R,I^j)$ is injective, it can be extended to an injective resolution, say
$J^*$, of $\Hom_R(\sr R,M)$ over $R$.  By hypothesis, $M$ is a direct summand of
$\Hom_R(\sr R,M)$, as $R$-modules, so the complex $I^*$ is a direct summand of the complex
$J^*$.

Applying the functor $\gam(-)$ and using Lemma~\ref{localcohomology}, one sees 
that the complex $\gam(I^*)$ is a direct summand of the complex 
$\Hom_R(\sr R,\gam(J^*))$.  One has $\gam(I^i) = 0$ for $0 \leq i < g$, see \cite [(3.2.9)]{BH}, so one gets a commutative diagram of $R$-linear maps
\begin{equation*}
\xymatrixcolsep{1.2pc}
\xymatrix{
0 \ar[r]
&\lch g(M) \ar[r]
&\gam(I^g) 
\ar[r]^{\gam(d^g)}\ar[d]^{\gam(f^g)}
&\gam(I^{g+1}) 
\ar[d]^{\gam(f^{g+1})}
\\
0 \ar[r]
 & \Ker \Hom_R(\sr R,\gam(d^g)) \ar[r]
 &\Hom_R(\sr R,\gam(I^g)) \ar[r]
 & \Hom_R(\sr R,\gam(I^{g+1}))
}
\end{equation*}
where the vertical ones are split-injective.  It shows 
that $\lch g(M)$ is a direct summand of $ \Ker \Hom_R(\sr R,\gam(d^g)) $, and that 
this latter is isomorphic to $ \Hom_R(\sr R,\Homology^g_{\mf m}(M))$.
Therefore, Lemma~\eqref{freeinj} implies that the $R$-module $\Homology^g_{\mf m}(M)$ is
injective.
\end{mainthmproof}

%%%\begin{equation*}
%%%\xymatrix{
%%%0 \ar[r]
%%%&\gam(I^g) 
%%%\ar[rrr]^{\gam(d^g)} \ar[d]^{\gam(f^g)}
%%% &   & &\gam (I^{g+1}) \ar[d]^{\gam(f^{g+1})}  \\
%%%&\Hom_R(\sr R,\gam(I^g)) \ar[rrr]^-{\Hom_R(\sr R,\gam(d^g))}
%%% & & & \Hom_R(\sr R,\gam(I^{g+1}))
%%%}
%%%\end{equation*}

\section{Gorenstein Dimension}

Iyengar and Sather-Wagstaff \cite{IS} show that a local ring $R$ equipped with a
contracting endomorphism is Gorenstein if and only if the Gorenstein dimension of $\sr R$
is finite. Here we give a short proof of this result when $\varphi$ is finite.

\begin{chunk}
  Let $M$ be a finite $R$-module. One says that $M$ is \emph{totally reflexive} if
\begin{enumerate}[\quad \rm (a)]
\item The canonical map $M \to \Hom_R(\Hom_R(M,R),R)$ is an isomorphism;
\item $\Ext^i_R (M,R) = 0$ for all $i>0$; and
\item $\Ext^i_R(\Hom_R(M,R),R)=0$ for all $i>0$.
\end{enumerate}
The \emph{Gorenstein dimension} of $M$, or $\operatorname{G-dim}_R(M)$, is 
defined to be
the least integer $n$ such that the $n$th syzygy of $M$ is totally reflexive.  
\end{chunk}

For every finite $R$-module $N$, set $\type N=\rank_k\Ext^g_R(k,N)$,
where $g=\depth N$.

\begin{chunk}
\label{semidualizing}
A finite $R$-module $C$ is \emph{semi-dualizing} if the canonical map 
$R \to \Hom_R(C,C) $ is an isomorphism and $\ext i R C C =0$ for all 
$i \geq 1$. For example, the ring $R$ is a semi-dualizing module. Each  semi-dualizing module $C$ satisfies an equality
\begin{align}
 \label{type}
 \type R = \mu_R  (C) \type C
\end{align}
 where $\mu_R(C)$ denotes the minimal number of generators of $C$; \cite[(3.18.2)]{C}.
\end{chunk}

The next result contains Theorem~\ref{theorem3} from the introduction.

\begin{theorem}
\label{gdim}
Let  $\varphi \colon R \to R$ be a finite  contracting homomorphism.
The ring $R$ is Gorenstein if and only if $\operatorname{G-dim}_R(\sr R)$ is finite,
if and only if the $R$-module $\sr R$ is totally reflexive.
\end{theorem}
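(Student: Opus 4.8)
The plan is to prove the chain of equivalences in Theorem~\ref{gdim} by establishing: (Gorenstein) $\implies$ ($\sr R$ totally reflexive) $\implies$ ($\operatorname{G-dim}_R(\sr R)<\infty$) $\implies$ (Gorenstein). The middle implication is trivial from the definition of Gorenstein dimension (a totally reflexive module has G-dimension zero). For the first implication, if $R$ is Gorenstein then every finite $R$-module has finite Gorenstein dimension; but moreover $R$ is Cohen-Macaulay and, as established in the proof of Theorem~\ref{maintheorem} (the case $M=R$, which gives Corollary~\ref{maincorollary}), $\sr R$ is in fact maximal Cohen-Macaulay over the Cohen-Macaulay ring $R$, so combined with $\operatorname{G-dim}_R(\sr R)<\infty$ one concludes $\operatorname{G-dim}_R(\sr R)=0$, i.e.\ $\sr R$ is totally reflexive. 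Alternatively, one can cite directly that over a Gorenstein ring totally reflexive coincides with maximal Cohen-Macaulay.

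The substantive implication is ($\operatorname{G-dim}_R(\sr R)<\infty$) $\implies$ ($R$ Gorenstein). First I would reduce to the case where $R$ is complete, hence admits a canonical module, by passing to $\widehat R$ and using $\srh\widehat R\cong\sr R\otimes_R\widehat R$ together with faithful flatness: G-dimension is unchanged under completion, and $R$ is Gorenstein iff $\widehat R$ is. So assume $R$ is complete with canonical module $\omega$. The key point is that finiteness of $\operatorname{G-dim}_R(\sr R)$ forces the $\Ext$-vanishing $\Ext^i_R(\sr R,R)=0$ for $i>0$, and by the Auslander--Bridger formula $\operatorname{G-dim}_R(\sr R)=\depth R-\depth\sr R=0$, so $\sr R$ is totally reflexive; in particular $C:=\Hom_R(\sr R,\omega)$ is a well-behaved module. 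The strategy is to show that $C$ is a semi-dualizing module: using that $\sr R$ is maximal Cohen-Macaulay and totally reflexive, standard duality with respect to $\omega$ (as in \cite[(3.3.3),(3.3.7)]{BH}) gives that $\Hom_R(\sr R,\omega)$ is maximal Cohen-Macaulay and that applying $\Hom_R(-,\omega)$ twice recovers $\sr R$; combining the $\Ext$-vanishing against $R$ and against $\omega$ yields $\Ext^i_R(C,C)=0$ for $i\geq1$ and $R\xrightarrow{\ \cong\ }\Hom_R(C,C)$. Now apply the numerical identity~\eqref{type}: $\type R=\mu_R(C)\cdot\type C$.

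The main obstacle—and the crux of the argument—is squeezing out from this that $R$ is Gorenstein. Here I would again invoke Theorem~\ref{injbasechange} or the computation in Lemma~\ref{freeinj}: finiteness of $\varphi$ gives $\Hom_R(\sr R,\omega)$ the structure relating it to $\sr{}$ applied to $\omega$, and one computes that $\type\sr R$ equals $\type R$ (the Bass number $\rank_k\Ext^g_R(k,\sr R)$ is unchanged because $\sr R$ is obtained from $R$ by restriction along an endomorphism that is surjective modulo high powers of $\mf m$—concretely, $k\otimes_R\sr R\cong k$ and $\sr k\cong k$). Feeding $\type C=\type\Hom_R(\sr R,\omega)=1$ (a canonical-type module has type one) and $\type R\ge1$ into~\eqref{type} forces $\mu_R(C)=\type R$; but one also has $\mu_R(\Hom_R(\sr R,\omega))=\type_R(\sr R\otimes_R\text{(residue)})$-type data that pins $\mu_R(C)=1$, whence $\type R=1$, and a Cohen-Macaulay local ring of type one is Gorenstein. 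The delicate bookkeeping is exactly the identification of $\mu_R(C)$ and of the types through the non-bijective $\operatorname{Spec}$ map, and that is where I expect the real work to lie; the rest is assembling standard duality facts over the canonical module.
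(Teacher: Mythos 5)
The overall structure of your argument---Gorenstein $\implies$ totally reflexive $\implies$ finite $\operatorname{G-dim}$ $\implies$ Gorenstein, with the first two implications coming from the Auslander--Bridger formula and the definitions---matches the paper, and your use of the type formula~\eqref{type} and semi-dualizing modules puts you in the right circle of ideas. But the key implication has a genuine gap that stems from an unnecessary detour through the canonical module.

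You set $C := \Hom_R(\sr R,\omega)$ and invoke \cite[(3.3.3),(3.3.7)]{BH}, which presuppose that $R$ is Cohen--Macaulay and that $\sr R$ is maximal Cohen--Macaulay. Neither of these is available: since $\sr R$ equals $R$ as an abelian group, $\sr R$ is MCM precisely when $R$ is CM, and the CM-ness of $R$ is part of what is being proved, not a given. Total reflexivity of $\sr R$ gives you $\depth_R \sr R = \depth R$ via Auslander--Bridger, but it does \emph{not} give $R$ Cohen--Macaulay. So the object $\omega$ you want to dualize against need not exist, and the entire line of reasoning about $\type C = 1$ (``a canonical-type module has type one'') is resting on an assumption you have not earned. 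You flag the type bookkeeping as ``where the real work lies,'' and indeed it is: the step $\type C = 1$ is not established by anything you say, and your parallel claim about $\mu_R(C)$ via ``$\sr{}$ applied to $\omega$'' is vague.

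The paper sidesteps the Cohen--Macaulay question entirely by choosing $C := \Hom_R(\sr R, R)$---dualizing against $R$ itself, not against $\omega$. Since $\sr R$ is totally reflexive, $\Ext^i_R(\sr R,R) = 0$ for $i\ge 1$, so $C \simeq \Rhom R{\sr R}R$ and one computes $\Rhom S C C \cong \Rhom R{\Rhom R{\sr R}R}R \cong \sr R$ by adjunction and biduality, making $C$ semi-dualizing over $S=R$. A second adjunction $\Rhom S k C \cong \Rhom R k R$ gives $\type C = \type S$, and then~\eqref{type} forces $\mu_S(C)=1$, i.e.\ $C$ is cyclic; since the bijectivity of $S\to\Hom_S(C,C)$ gives $\Ann_S C = 0$, one gets $C\cong S$, hence $\Hom_R(\sr R,R)\cong R$ with the requisite $\Ext$-vanishing, and Corollary~\ref{maincorollary} closes the loop. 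No completion, no canonical module, and no Cohen--Macaulay hypothesis is needed at any intermediate stage. If you want to salvage your route, you would first have to prove $R$ is Cohen--Macaulay from finite $\operatorname{G-dim}_R(\sr R)$, which is neither obvious nor needed.
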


\begin{proof}
Set $g=\depth R$.  It is convenient to set $S=R$, and to view $\varphi$ as a local homomorphism
$\varphi\colon R\to S$ and $S$ as a left $R$-module via $\varphi$. Note that $S \cong \sr R$ as left $R$-modules.
Recall that when $X$ is a complex of $R$-modules $\Rhom R X -$  denotes
the right derived functor of $\Hom_R(X,-)$, and  $\dtensor X R -$ denote the left 
derived functor of  $X \otimes _R -$.

When $R$ is Gorenstein each finite $R$-module, and hence $S$, has finite $G$-dimension.

Assume $ \operatorname{G-dim}_R(S)$ is finite. As one has $\depth_R S = g$, 
so the Auslander-Bridger formula, see \cite[(4.13)]{AB}, shows that 
the $R$-module $S$ is totally reflexive. 

Finally, assume that $S$ is totally  reflexive over $R$, and set $C=\Hom_R(S,R)$.  
One then has $\Ext^{i}(S,R)=0$ for $i\geq 1$, so $C$ is  quasi-isomorphic to 
$\Rhom R SR$
as complexes of $S$-modules.  This gives the first isomorphism of $S$-modules:
\begin{align*}
\Rhom SCC & = \Rhom S{\Rhom R SR} {\Rhom R SR} \\ 
          & \cong \Rhom R{\Rhom RSR} R \\ 
          & \cong S
\end{align*}
The second one holds by adjuntion, and the last one by total reflexivity.
Thus, the $S$-module $C$ is semi-dualizing. One thus has the following isomorphisms:
\[
\Rhom SkC = \Rhom Sk {\Rhom R SR} \cong \Rhom R k R\,.
\]
Since the rings $S$ and $R$ are equal, they imply $\type C = \type S$.  Formula \eqref{type} shows
that the $S$-module $C$ is  cyclic. Moreover, $\Ann_{S}C=\{0\}$, since the  canonical map $S \to \Hom_S(C,C)$ is bijective.  We thus conclude that $C$ is isomorphic
to $\Hom_R(S,R)$, so $S$, and hence $R$, is Gorenstein by Corollary
\ref{maincorollary}.
\end{proof}

\section*{Acknowledgments}
I am grateful to my advisers, Luchezar  Avramov and Srikanth Iyengar, for their guidance.
 I also thank Ryo Takahashi for  helpful conversations.

\end{document}